\title{A survey of s-unital and locally unital rings}
\newtheorem{thm}{Theorem}
\newtheorem{prop}[thm]{Proposition}
\theoremstyle{definition}
\newtheorem{defi}[thm]{Definition}
\newtheorem{exa}[thm]{Example}
\newtheorem{rem}[thm]{Remark}
\begin{document}

\author{Patrik Nystedt}
\address{University West,
Department of Engineering Science, 
SE-46186 Trollh\"{a}ttan, Sweden}

\email{patrik.nystedt@hv.se} 

\subjclass[2010]{1602, 16D99, 16S99, 16U99}

\keywords{unital ring, rings with enough idempotents, rings with sets of local units, 
locally unital ring, s-unital ring, idempotent ring}

\begin{abstract}
We gather some classical results and examples that show strict inclusion
between the families of unital rings, rings with enough idempotents,
rings with sets of local units, locally unital rings, s-unital rings and idempotent rings.
\end{abstract}

\maketitle

In many presentations of ring theory, authors make the assumption that all rings are unital,
that is that they possess a multiplicative identity element.
There are, however, lots of natural constructions in ring theory which share all
properties of unital rings except the property of having a multipli\-cative identity.
Such constructions include ideals, infinite direct sums of rings, and linear
transformations of finite rank of an infinite dimensional vector space.
For many examples of rings lacking a multiplicative identity there still exist weaker versions of identity elements.
The purpose of the present article is to gather some classical results and
examples of rings having different degrees of weak forms of identity elements,
ordered in hierarchy. 
To be more precise, we wish to show the following strict inclusions of families of rings:
$$
\begin{array}{rcl}
\{ \mbox{unital rings} \} & \subsetneq & \{ \mbox{rings with enough idempotents} \} \\
                          & \subsetneq & \{ \mbox{rings with sets of local units} \} \\
                          & \subsetneq & \{ \mbox{locally unital rings} \} \\
                          & \subsetneq & \{ \mbox{s-unital rings} \} \\
                          & \subsetneq & \{ \mbox{idempotent rings} \} \\
                          & \subsetneq & \{ \mbox{rings} \}.
\end{array}
$$
In our presentation, we will begin with the class of rings and narrow down our
results and examples until we reach the class of unital rings.

\begin{defi}
Throughout this article, $R$ denotes an associative ring.
We do not assume that $R$ has a multiplicative identity.
Let $\mathbb{Z}$ denote the set of integers and
let $\mathbb{N}$ denote the set of positive integers.
\end{defi}

\begin{defi}
The ring $R$ is called \emph{idempotent} if $R^2 = R$.
Here $R^2$ denotes the set
of all finite sums of elements of the form $r s$
for $r ,s \in R$.
\end{defi}

\begin{exa}
It is easy to construct rings which are not idempotent.
In fact, let $A$ be any non-zero abelian group.
Define a multiplication on $A$ by saying that $ab = 0$ for all $a,b \in A$.
Then $A^2 = \{ 0 \} \neq A$.

Another generic class of examples is constructed in the following way.
If $R$ is a ring and $I$ is a two-sided ideal of $R$,
with $I^2 \subsetneq I$, then $I$ is a ring which is not idempotent.
This holds for many rings $R$, for instance when $R = \mathbb{Z}$ 
and $I$ is any non-trivial ideal of $R$.
\end{exa}

The next definition was introduced by Tominaga 
in \cite{tominaga1975} and \cite{tominaga1976}.

\begin{defi}
Let $M$ be a left (right) $R$-module.
We say that $M$ is $s$-\emph{unital} if for every $m \in M$ the relation
$m \in Rm$ ($m \in mR$) holds. 
If $M$ is an $R$-bimodule, then we say that $M$
is $s$-\emph{unital} if it is $s$-unital both as a left $R$-module
and as a right $R$-module.
The ring $R$ is said to be {\it left (right) $s$-unital}
if it is left (right) $s$-unital as a left (right) module over itself.
The ring $R$ is said to be $s$-\emph{unital} if it is $s$-unital
as a bimodule over itself.
\end{defi}

\begin{exa}
The following example shows that there exist idempotent rings that
are neither left nor right $s$-unital.
Let $G = \{ e,g \}$ denote the associative semigroup
defined by the relations $e \cdot e = e$ and $e \cdot g = g \cdot e = g \cdot g = g$.
Let $K$ denote a field and put $u = (1,0)$ and $v = (0,1)$ in $K \times K$.
Let $R$ denote the twisted semigroup ring $(K \times K)[G]$
where the multiplication is defined by
$$(x_1 + x_2 g)(y_1 + y_2 g) = x_1 y_1 + (x_1 y_2 e_2 + x_2 y_1 e_1) g$$
for $x_1,x_2,y_1,y_2 \in K \times K$.
Then $R$ is associative. Indeed, take $$x_1,x_2,y_1,y_2,z_1,z_2 \in K \times K.$$ 
A straightforward calculation shows that
$$( (x_1 + x_2 g)(y_1 + y_2 g) )(z_1 + z_2 g) = 
x_1 y_1 z_1 + (x_2 y_1 z_1 e_1 + x_1 y_1 z_2 e_2)g$$
and
$$(x_1 + x_2 g) ( (y_1 + y_2 g) (z_1 + z_2 g) ) = 
x_1 y_1 z_1 + (x_2 y_1 z_1 e_1 + x_1 y_1 z_2 e_2)g.$$
Also $R$ is neither left nor right $s$-unital.
In fact, take $x_1,y_2 \in K \times K$.
If $g (x_1 + x_2 g) = g$ then $e_1 x_1 g = g$
so that $e_1 x_1 = (1,1)$ in $K \times K$ which is a contradiction.
In the same way $(x_1 + x_2 g) g = g$ leads to
$x_1 e_2 = (1,1)$ in $K \times K$ which is a contradiction.
However, $R$ is idempotent since for all $(k,l) \in K \times K$
the following relations hold
$$(k,l)1 = (k,l) \cdot (1,1) \in R^2$$ 
and 
$$(k,l)g = (k,0)g \cdot (1,1)1 + (0,l)1 \cdot (1,1)g \in R^2.$$
\end{exa}

\begin{exa}\label{exampleleftright}
The following example (inspired by \cite[Exercise 1.10]{lam2003}) 
shows that there are lots of examples of
rings which are left (right) $s$-unital but not right (left) $s$-unital.
Let $A$ be a unital ring with a non-zero multiplicative identity $1$.

(a) Let $B_l$ denote the set $A \times A$
equipped with componentwise addition and multiplication defined
by the relations 
$$(a , b)(c , d) = (ac , ad)$$
for $a,b,c,d \in A$. 
Now we show that $B_l$ is associative.
Take $a,b,c,d,e,f \in A$. Then
$$( (a,b)(c,d) ) (e,f) = (ac,ad)(e,f) = (ace , acf)$$
and 
$$(a,b) ( (c,d)(e,f) ) = (a,b)(ce,cf) = (ace , acf).$$
It is clear that any element of the form
$(1,a)$, for $a \in A$, is a left identity for $B_l$.
However, $B_l$ is not right unital.
Indeed, since $(0,1) \notin \{ (0,0) \} = (0,1) B_l$
it follows that $B_l$ is not even right $s$-unital.
For each $n \in \mathbb{N}$ let $C_n$ denote a copy of $B_l$
and put $C = \oplus_{n \in \mathbb{N}} C_n$. Then $C$ is left $s$-unital
but not left unital. Since none of the $C_n$ are right $s$-unital
it follows that $C$ is not right $s$-unital.

(b) Let $B_r$ denote the set $A \times A$
equipped with componentwise addition and multiplication defined
by the relations 
$$(a , b)(c , d) = (ac , b c)$$
for $a,b,c,d \in A$. 
Now we show that $B_r$ is associative.
Take $a,b,c,d,e,f \in A$. Then
$$( (a,b)(c,d) ) (e,f) = (ac,bc)(e,f) = (ace , bce)$$
and
$$(a,b) ( (c,d)(e,f) ) = (a,b) (ce , de) = (ace , bce).$$ 
It is clear that any element of the form
$(1,a)$, for $a \in A$, is a right identity for $B_l$.
However, $B_r$ is not left unital.
Indeed, since $(0,1) \notin \{ (0,0) \} = B_r (0,1)$
it follows that $B_r$ is not even left $s$-unital.
For each $n \in \mathbb{N}$ let $D_n$ denote a copy of $B_r$
and put $D = \oplus_{n \in \mathbb{N}} D_n$. Then $D$ is right $s$-unital
but not right unital. Since none of the $D_n$ are left $s$-unital
it follows that $D$ is not left $s$-unital.
\end{exa}

\begin{defi}\label{defvee}
If $e' , e'' \in R$, then put $e' \vee e'' = e' + e'' - e' e''$.
\end{defi}

\begin{prop}\label{proptominaga}
Let $M$ be a left (right) $R$-module.
Then $M$ is left (right) $s$-unital if and only if 
for all $n \in \mathbb{N}$ and
all $m_1,\ldots,m_n \in M$ there is $e \in R$ such that
for all $i \in \{ 1,\ldots,n \}$ the relation $e m_i = m_i$ ($m_i e = m_i$) holds.
\end{prop}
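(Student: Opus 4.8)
The plan is to prove the two implications separately, with only the forward direction requiring work. For the ``if'' direction, specializing the hypothesis to $n = 1$ immediately yields, for each $m \in M$, an element $e \in R$ with $em = m$ (resp.\ $me = m$), so that $m \in Rm$ (resp.\ $m \in mR$); this is exactly the definition of left (right) $s$-unitality. Thus the whole content lies in the ``only if'' direction, which I would establish by induction on $n$.

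For the induction I would treat the left-module case and note that the right-module case is entirely symmetric. The base case $n = 1$ is precisely the defining property $m_1 \in Rm_1$. For the inductive step, suppose that for $m_1, \ldots, m_{n-1}$ we have already produced $e' \in R$ with $e'm_i = m_i$ for all $i < n$. The difficulty is to absorb the remaining element $m_n$ without disturbing the relations already secured. To this end I would apply $s$-unitality not to $m_n$ itself but to the residual $m_n - e'm_n$, obtaining $e'' \in R$ with $e''(m_n - e'm_n) = m_n - e'm_n$, and then set $e = e'' \vee e' = e'' + e' - e''e'$ as in Definition \ref{defvee}.

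The verification is then a short computation resting on two observations. First, for any $i < n$, since $e'm_i = m_i$ we get $e''e'm_i = e''m_i$, whence $em_i = e''m_i + e'm_i - e''e'm_i = m_i$; that is, the operation $\vee$ leaves untouched every vector already fixed by $e'$, irrespective of the choice of $e''$. Second, $em_n = e''m_n + e'm_n - e''e'm_n = e''(m_n - e'm_n) + e'm_n = (m_n - e'm_n) + e'm_n = m_n$, so the newly introduced element is fixed as well. Hence $e$ works simultaneously for $m_1, \ldots, m_n$, completing the induction.

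I expect the main obstacle to be locating the correct combinator rather than carrying out any hard estimate: the naive attempts to multiply or add the witnesses fail, and one must recognize that $e'' \vee e'$ has exactly the \emph{absorption} property that a previously fixed vector stays fixed while a freshly chosen witness handles the residual. Once Definition \ref{defvee} is in hand this is transparent, and the right-handed version follows by replacing $e'' \vee e'$ with $e' \vee e''$ and the residual $m_n - e'm_n$ with $m_n - m_n e'$.
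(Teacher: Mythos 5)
Your proof is correct and follows essentially the same route as the paper's: an induction on $n$ using the combinator $e'' \vee e'$ together with the trick of applying a fresh witness to a residual of the form $m - em$. The only (immaterial) organizational difference is that the paper first fixes $m_n$ and applies the inductive hypothesis to the residuals $m_i - e_n m_i$ of the first $n-1$ elements, whereas you apply the inductive hypothesis directly to $m_1, \dots, m_{n-1}$ and residualize only $m_n$.
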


\begin{proof}
We follow the proof of \cite[Theorem 1]{tominaga1976}.
The ``if'' statements are trivial.
Now we show the ``only if'' statements.

First suppose that $M$ is a left $R$-module which is $s$-unital.
Take $n \in \mathbb{N}$ and $m_1,\ldots,m_n \in M$.
Take $e_n \in R$ such that $e_n m_n = m_n$
and for every $i \in \{ 1, \ldots , n-1 \}$ put $v_i = m_i - e_n m_i$.
By induction there is an element $e' \in R$ such that
for every $i \in \{ 1, \ldots , n-1 \}$ the equality $e' v_i = v_i$ holds.
Put $e = e' \vee e_n$. Then
$$e m_n = 
e' m_n + e_n m_n - e' e_n m_n =
e' m_n + m_n - e' m_n = m_n$$
and for every $i \in \{ 1, \ldots , n-1 \}$ we get that
$$
\begin{array}{rcl}
e m_i & = & e' m_i + e_n m_i - e' e_n m_i \\ 
      & = & e' (m_i - e_n m_i) + e_n m_i \\
      & = & e' v_i + e_n m_i \\
      & = & v_i + e_n m_i \\
      & = & m_i - e_n m_i + e_n m_i \\
      & = & m_i.
\end{array}
$$
Now suppose that $M$ is a right $R$-module which is $s$-unital.
Take $n \in \mathbb{N}$ and $m_1,\ldots,m_n \in M$.
Take $e_n \in R$ such that $m_n e_n = m_n$
and for every $i \in \{ 1, \ldots , n-1 \}$ put $v_i = m_i - m_i e_n$.
By induction there is an element $e' \in R$ such that
for every $i \in \{ 1, \ldots , n-1 \}$ the equality $v_i e' = v_i$ holds.
Put $e = e_n \vee e'$. Then
$$m_n e = 
m_n e' + m_n e_n - m_n e_n e' =
m_n e' + m_n - m_n e' = m_n$$ 
and for every $i \in \{ 1, \ldots , n-1 \}$ we get that
$$
\begin{array}{rcl}
m_i e & = & m_i e' + m_i e_n - m_i e_n e' \\
      & = & (m_i - m_i e_n) e' + m_i e_n \\
      & = & v_i e' + m_i e_n \\
      & = & v_i + m_i e_n \\
      & = & m_i - m_i e_n + m_i e_n \\
      & = & m_i.
\end{array}
$$     
\end{proof}

\begin{prop}\label{e'e''ebimodule}
Let $M$ be an $R$-bimodule and suppose that $e',e'' \in R$.
Let $X$ be a subset of $M$ such that for all $m \in X$ the relations 
$e' m = m e'' = m$ hold, then for all $m \in X$ the following relations hold 
$$(e'' \vee e') m = m (e'' \vee e') = m.$$ 
\end{prop}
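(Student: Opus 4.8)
The plan is to prove both equalities by direct substitution: expand $e'' \vee e'$ according to Definition~\ref{defvee} and then apply the two hypotheses $e'm = m$ and $me'' = m$ to collapse the resulting sums. Since everything reduces to a short manipulation of three terms, no induction or auxiliary construction is needed.

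First I would fix an arbitrary $m \in X$ and record that $e'' \vee e' = e'' + e' - e'' e'$. For the left identity the key step is to apply the hypothesis $e'm = m$ to the last summand, observing that $e''e'm = e''(e'm) = e''m$. This makes the first and last terms of $(e'' \vee e')m = e''m + e'm - e''e'm$ cancel, so that what remains is $e'm = m$. The right identity is handled symmetrically: here I would apply $me'' = m$ to the last summand of $m(e'' \vee e') = me'' + me' - me''e'$, using $me''e' = (me'')e' = me'$, after which the middle and last terms cancel and only $me'' = m$ survives.

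There is no genuine obstacle in this argument; the one point worth flagging is the role played by the \emph{order} of the operands in $e'' \vee e'$. In the product $e''e'$ the factor $e'$ is innermost on the right, so left multiplication of it onto $m$ is absorbed by the left relation $e'm = m$; dually, $e''$ is innermost on the left, so right multiplication is absorbed by $me'' = m$. Forming the join in the opposite order as $e' \vee e''$ would break both cancellations under the hypotheses as stated, so this particular arrangement is precisely what allows a single element to act as a two-sided identity on all of $X$.
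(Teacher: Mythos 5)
Your proof is correct and is essentially identical to the paper's: both expand $e'' \vee e' = e' + e'' - e''e'$ and use $e'm = m$ to reduce $e''e'm$ to $e''m$ for the left action, and $me'' = m$ to reduce $me''e'$ to $me'$ for the right action. Your closing remark about why the operands must be joined in this particular order is a correct and worthwhile observation, matching the convention the paper uses implicitly.
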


\begin{proof}
This is essentially the proof of \cite[Lemma 1]{mogami1978}.
Take $m \in X$. Then
$$(e'' \vee e') m = 
(e' + e'' - e'' e')m =
e' m + e'' m - e'' e' m =
m + e'' m - e'' m = m$$ 
and
$$m (e'' \vee e') = 
m(e' + e'' - e'' e') =
m e' + m e'' - m e'' e' =
m e' + m - m e' = m.$$
\end{proof}

\begin{prop}\label{propmogami}
Let $M$ be an $R$-bimodule.
Then $M$ is $s$-unital if and only if 
for all $n \in \mathbb{N}$ and
all $m_1,\ldots,m_n \in M$ there is $e \in R$ such that
for all $i \in \{ 1,\ldots,n \}$ the relation $e m_i = m_i e = m_i$ holds.
\end{prop}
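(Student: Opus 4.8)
The plan is to reduce everything to the two preceding propositions, so that the argument becomes a short assembly rather than a fresh induction. The ``if'' direction is immediate: given an $s$-unital bimodule hypothesis phrased for arbitrary finite families, I simply specialize to $n = 1$, which recovers the defining relations $e m = m e = m$ and hence shows $m \in Rm$ and $m \in mR$ for every $m \in M$. So the content lies entirely in the ``only if'' direction.

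For the ``only if'' direction, I would start from an $s$-unital bimodule $M$ and a finite family $m_1, \ldots, m_n \in M$. The key observation is that $M$ is in particular $s$-unital as a left module and as a right module. I would therefore invoke Proposition \ref{proptominaga} twice, once on each side: first to produce an element $e' \in R$ with $e' m_i = m_i$ for all $i \in \{1, \ldots, n\}$, and then to produce an element $e'' \in R$ with $m_i e'' = m_i$ for all $i \in \{1, \ldots, n\}$. At this stage I have a common left unit and a common right unit for the family, but these two elements need not coincide, so a single two-sided $e$ is not yet in hand.

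The merging step is exactly what Proposition \ref{e'e''ebimodule} is built to do. Taking $X = \{ m_1, \ldots, m_n \}$, the relations $e' m_i = m_i e'' = m_i$ hold for every $m_i \in X$, so the proposition yields that $e := e'' \vee e'$ satisfies $e m_i = m_i e = m_i$ for all $i$, which is precisely the desired conclusion. The only conceptual obstacle is the non-coincidence of the one-sided units just noted; I do not expect genuine difficulty there, since the operation $\vee$ from Definition \ref{defvee} together with Proposition \ref{e'e''ebimodule} resolves it by a direct computation. In short, the whole proof amounts to applying Proposition \ref{proptominaga} on each side and then gluing the results with Proposition \ref{e'e''ebimodule}.
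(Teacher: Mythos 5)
Your argument is correct and matches the paper's own proof essentially verbatim: both apply Proposition \ref{proptominaga} separately to the left and right module structures to obtain $e'$ and $e''$, and then glue them via Proposition \ref{e'e''ebimodule} with $e = e'' \vee e'$ and $X = \{ m_1, \ldots, m_n \}$. No gaps.
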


\begin{proof}
The ``if'' statement is trivial.
Now we show the ``only if'' statement.
Take $n \in \mathbb{N}$ and $m_1,\ldots,m_n \in M$.
From Proposition \ref{proptominaga} it follows that
there are $e' , e'' \in R$ such that for all $i \in \{ 1, \ldots, n \}$
the relations $e' m_i = m_i e'' = m_i$ hold.
The claim now follows from Proposition \ref{e'e''ebimodule}
if we put $e = e'' \vee e'$ and $X = \{ m_1 , \ldots , m_n \}$.
\end{proof}

\begin{prop}
The ring $R$ is left (right) $s$-unital if and only if
for all $n \in \mathbb{N}$ and all $r_1 , \ldots , r_n \in R$ there is $e \in R$ such that
for all $i \in \{ 1,\ldots,n \}$ the relation $e r_i = r_i$ ($r_i e = r_i$) holds.
\end{prop}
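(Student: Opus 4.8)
The plan is to recognize this statement as the specialization of Proposition \ref{proptominaga} to the case where the module is $R$ itself. By the final sentence of the definition of $s$-unitality, the ring $R$ is left (right) $s$-unital precisely when $R$ is left (right) $s$-unital as a left (right) module over itself, where the module action is just the ring multiplication. Thus I would simply apply Proposition \ref{proptominaga} with $M = R$.

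Concretely, I would set $M = R$ viewed as a left (right) $R$-module and let the elements $m_1, \ldots, m_n$ of that proposition be the ring elements $r_1, \ldots, r_n$. For a left module the action of $r \in R$ on $m \in M = R$ is $r m$, the ordinary product in $R$, so the condition $e m_i = m_i$ from Proposition \ref{proptominaga} reads exactly as $e r_i = r_i$; the right-handed case is symmetric with $m_i e = r_i e$. Since the element $e$ furnished by Proposition \ref{proptominaga} lies in $R$, it is an admissible choice here, and the two conditions match verbatim.

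There is no real obstacle to overcome: the content of the statement has already been established at the level of modules, and the only thing to verify is that the module-theoretic hypothesis and conclusion translate correctly under the identification $M = R$. I would therefore present the proof as a one-line invocation of Proposition \ref{proptominaga} in both the left and right cases, noting that ``left (right) $s$-unital ring'' is by definition ``left (right) $s$-unital module over itself'' and that the module action coincides with ring multiplication.
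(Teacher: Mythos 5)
Your proposal is correct and matches the paper exactly: the paper's proof is the one-line statement that the result follows from Proposition \ref{proptominaga}, applied to $R$ as a module over itself. Your additional remarks spelling out the identification $M = R$ are accurate and harmless.
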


\begin{proof}
This follows from Proposition \ref{proptominaga}.
\end{proof}

\begin{prop}
The ring $R$ is $s$-unital if and only if
for all $n \in \mathbb{N}$ and all $r_1 , \ldots , r_n \in R$ there is $e \in R$ such that
for all $i \in \{ 1,\ldots,n \}$ the relations $e r_i = r_i e = r_i$ hold.
\end{prop}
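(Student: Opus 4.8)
The plan is to recognize that this statement is exactly the specialization of Proposition \ref{propmogami} to the case $M = R$. By the definition of an $s$-unital ring, $R$ is $s$-unital precisely when it is $s$-unital as a bimodule over itself, where the left and right actions are simply left and right multiplication in $R$. The bimodule axioms hold automatically by associativity of $R$, so nothing needs to be checked on that front. Under the identification $M = R$, the condition ``$e m_i = m_i e = m_i$ for all $i$'' appearing in Proposition \ref{propmogami} becomes ``$e r_i = r_i e = r_i$ for all $i$'', which is precisely the finite-family condition in the present statement.

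First I would dispatch the ``if'' direction by specializing the finite-family hypothesis to $n = 1$: for each $r \in R$ there is $e \in R$ with $e r = r e = r$, so that $r \in Rr$ and $r \in rR$. This says exactly that $R$ is $s$-unital as a left and as a right module over itself, hence $s$-unital as a bimodule, which is the definition of an $s$-unital ring. For the ``only if'' direction, I would start from the assumption that $R$ is $s$-unital, unwind the definition to conclude that $R$ is $s$-unital as an $R$-bimodule, and then apply Proposition \ref{propmogami} with $M = R$ to produce, for any $r_1, \ldots, r_n \in R$, an element $e \in R$ with $e r_i = r_i e = r_i$ for all $i$.

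Since the entire argument reduces to invoking Proposition \ref{propmogami}, there is no real obstacle here; this mirrors how the preceding one-sided proposition follows directly from Proposition \ref{proptominaga}. The only point requiring care is the bookkeeping step of confirming that the defined notion of ``$s$-unital ring'' coincides with ``$s$-unital bimodule over itself,'' which is exactly what the relevant definition asserts, so the reduction is immediate and the proof can be stated in a single line.
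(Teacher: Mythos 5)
Your proposal is correct and matches the paper's proof exactly: the paper also proves this by specializing Proposition \ref{propmogami} to the case $M = R$. The additional bookkeeping you note (that ``$s$-unital ring'' means ``$s$-unital as a bimodule over itself'') is precisely what makes the one-line reduction valid.
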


\begin{proof}
This follows from Proposition \ref{propmogami}. 
\end{proof}

\begin{defi}
An element $e \in R$ is called \emph{idempotent} if $e^2 = e$.
\end{defi}

\begin{defi}\label{deflocallyunital}
We say that $R$ is {\it left (right) locally unital} if for all $n \in \mathbf{N}$
and all $r_1,\ldots,r_n \in R$ there is an idempotent $e \in R$
such that for all $i \in \{ 1,\ldots,n \}$ the equality
$e r_i = r_i$ ($r_i e = r_i$) holds.
We say that $R$ is {\it locally unital} if it is both left locally unital
and right locally unital.
\end{defi}

\begin{exa}
Let $R$ denote the ring of real valued continuous functions
on the real line with compact support.
Then $R$ is $s$-unital but neither left nor right locally unital.
\end{exa}

The next definition was introduced by \'{A}nh and M\'{a}rki in \cite{anh1987}.

\begin{defi}\label{defanhmarki}
The ring $R$ is said to be {\it locally unital} if
for all $n \in \mathbb{N}$ and all $r_1 , \ldots , r_n \in R$
there is an idempotent $e \in R$ such that
for all $i \in \{ 1,\ldots,n \}$ the equalities
$e r_i = r_i e = r_i$ hold.
\end{defi}

\begin{prop}\label{idempotent}
Suppose that $e',e'' \in R$ are idempotents and put $e = e'' \vee e'$.
Then 
$e^2 = e + e' e'' - e' e'' e' - e'' e' e'' + e'' e' e'' e'.$
If either of the following equalities hold 
\begin{itemize}

\item[(i)] $e' e'' = e'$ 

\item[(ii)] $e' e'' = e''$

\item[(iii)] $e'' e' = e''$

\item[(iv)] $e'' e' = e'$

\item[(v)] $e' e'' = e'' e'$ 

\end{itemize}
then $e$ is idempotent.
\end{prop}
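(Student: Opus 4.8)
The plan is to prove the two assertions in turn: first derive the closed form for $e^2$ by a direct expansion, and then obtain all five special cases from a single factorization of the ``defect'' $e^2 - e$.

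For the identity, I would expand
$$e^2 = (e'' + e' - e'' e')(e'' + e' - e'' e')$$
into its nine products and repeatedly apply the idempotency relations $e'^2 = e'$ and $e''^2 = e''$ to collapse every square that appears (so that, e.g., $e'' e'' e' = e'' e'$ and $e' e' e'' = e' e''$). Collecting the resulting monomials, the ``linear'' part reassembles to $e'' + e' - e'' e' = e$, while the genuinely quadratic leftovers combine to $e' e'' - e' e'' e' - e'' e' e'' + e'' e' e'' e'$, which is exactly the claimed expression. This step is entirely mechanical; the only real danger is mishandling the order of the noncommuting factors, so I would expand one factor at a time and simplify after each multiplication.

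The key step is to notice that the defect factors as
$$e^2 - e = (e' - e'' e')(e'' - e'' e'),$$
which one checks by multiplying out the right-hand side and comparing with the four leftover monomials above; equivalently one may write it in the formal shorthand $(1 - e'')\, e'\, e''\, (1 - e')$, where $1 - e''$ and $1 - e'$ denote the operations $x \mapsto x - e'' x$ and $x \mapsto x - x e'$ (the formal units cancel upon expansion, so this is a genuine identity in $R$, no multiplicative identity being required).

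With the factorization in hand, each hypothesis forces one contiguous factor of the product to vanish. Condition (iv), $e'' e' = e'$, gives $e' - e'' e' = 0$, and condition (iii), $e'' e' = e''$, gives $e'' - e'' e' = 0$; in both cases $e^2 - e = (e' - e'' e')(e'' - e'' e') = 0$ at once. For the hypotheses involving $e' e''$ it is cleanest to use the shorthand form $(1-e'')\, e'\, e''\, (1-e')$: condition (i), $e' e'' = e'$, makes the trailing block $e' e''(1 - e')$ equal to $e'(1 - e') = e' - e'^2 = 0$; condition (ii), $e' e'' = e''$, makes the leading block $(1 - e'') e' e''$ equal to $(1 - e'') e'' = e'' - e''^2 = 0$; and condition (v), $e' e'' = e'' e'$, gives $e' e''(1 - e') = e' e'' - e' e'' e' = e' e'' - e' e' e'' = e' e'' - e' e'' = 0$ by commuting $e''$ past $e'$ and then using $e'^2 = e'$. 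In each case $e^2 - e = 0$, proving that $e$ is idempotent. I expect the initial nine-term expansion and the verification of the factorization to be the only places demanding care; once the factored form is secured, the five cases are essentially one line apiece, so the main obstacle is purely one of bookkeeping rather than of ideas.
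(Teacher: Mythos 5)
Your proposal is correct. The first half (the closed form for $e^2$) is obtained exactly as in the paper, by brute-force expansion of the nine products and repeated use of $e'^2 = e'$, $e''^2 = e''$. Where you genuinely diverge is in the treatment of the five special cases: the paper simply substitutes each hypothesis into the four leftover monomials $e'e'' - e'e''e' - e''e'e'' + e''e'e''e'$ and checks, in five separate displayed computations, that they cancel. You instead factor the defect as
$$e^2 - e = (e' - e''e')(e'' - e''e'),$$
equivalently the formal product $(1-e'')\,e'e''\,(1-e')$ interpreted via $x \mapsto x - e''x$ and $x \mapsto x - xe'$ (which is legitimate in a non-unital ring, since the expression expands to an honest element of $R$), and then observe that each of the five hypotheses annihilates one contiguous block of this product. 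I checked the factorization: expanding $(e'-e''e')(e''-e''e')$ does give precisely the four leftover terms, and each of your five one-line verifications is correct. The trade-off is that the paper's route is entirely mechanical and requires no insight beyond substitution, while yours requires spotting and verifying the factorization but then explains \emph{why} these five conditions all suffice --- each one kills a factor --- and reduces the case analysis to five one-liners. Both arguments are complete and correct.
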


\begin{proof}
A straightforward calculation shows that
$$
\begin{array}{rcl}
e^2 & = & ( e' + e'' - e'' e' )^2 \\
    & = & (e')^2 + e' e'' - e' e'' e' + e'' e' + (e'')^2 - (e'')^2 e' - e'' (e')^2 - e'' e' e'' + e'' e' e'' e' \\
    & = & e' + e' e'' - e' e'' e' + e'' e' + e'' - e'' e' - e'' e' - e'' e' e'' + e'' e' e'' e' \\
    & = & e + e' e'' - e' e'' e' - e'' e' e'' + e'' e' e'' e'.
\end{array}
$$   
Now we show the last part.
If (i) holds, then 
$$e' e'' - e' e'' e' - e'' e' e'' + e'' e' e'' e' =
e' - (e')^2 - e'' e' + e'' (e')^2 =
e' - e' - e'' e' + e'' e' = 0.$$
If (ii) holds, then
$$e' e'' - e' e'' e' - e'' e' e'' + e'' e' e'' e' =
e'' - (e'')^2 - (e'')^2 + (e'')^3 =
e'' - e'' - e'' + e'' = 0.$$
If (iii) holds, then 
$$e' e'' - e' e'' e' - e'' e' e'' + e'' e' e'' e' =
e' e'' - e' e'' - (e'')^2 + (e'')^2 =
-e'' + e'' = 0.$$
If (iv) holds, then
$$e' e'' - e' e'' e' - e'' e' e'' + e'' e' e'' e' =
e' e'' - (e')^2 - e' e'' + (e')^2 =
-e' + e' = 0.$$
If (v) holds, then
$$
\begin{array}{rcl}
e' e'' - e' e'' e' - e'' e' e'' + e'' e' e'' e' & = & e' e'' - (e')^2 e'' - e' (e'')^2 + (e')^2 (e'')^2 \\
                                                & = & e' e'' - e' e'' - e' e'' + e' e'' \\
                                                & = & 0.
\end{array}
$$
\end{proof}

\begin{prop}
A ring is locally unital in the sense of Definition \ref{deflocallyunital} 
if and only if it is locally unital in the sense Definition \ref{defanhmarki}.
\end{prop}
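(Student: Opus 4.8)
The plan is to prove the two implications separately, with essentially all the substance lying in the forward direction. The backward direction is immediate: if $R$ is locally unital in the sense of Definition \ref{defanhmarki}, then for any list $r_1,\ldots,r_n$ the single idempotent $e$ witnessing $e r_i = r_i e = r_i$ simultaneously witnesses the defining condition of left locally unital and of right locally unital, so $R$ is locally unital in the sense of Definition \ref{deflocallyunital}.

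For the forward direction, suppose $R$ is both left and right locally unital in the sense of Definition \ref{deflocallyunital}, and fix $r_1,\ldots,r_n \in R$. First I would invoke the right locally unital hypothesis to produce an idempotent $e'' \in R$ with $r_i e'' = r_i$ for every $i$. The key idea is then to feed $e''$ back into the left hypothesis: applying left local unitality to the \emph{enlarged} list $r_1,\ldots,r_n,e''$ yields an idempotent $e' \in R$ satisfying both $e' r_i = r_i$ for every $i$ and, crucially, $e' e'' = e''$.

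With these two idempotents in hand I would set $e = e'' \vee e'$. The relation $e' e'' = e''$ is exactly hypothesis (ii) of Proposition \ref{idempotent}, so that proposition guarantees $e$ is again idempotent. Moreover, viewing $R$ as a bimodule over itself and taking $X = \{ r_1,\ldots,r_n \}$, the relations $e' r_i = r_i e'' = r_i$ let me apply Proposition \ref{e'e''ebimodule} to conclude $e r_i = r_i e = r_i$ for every $i$. Thus $e$ is an idempotent acting as a two-sided identity on $r_1,\ldots,r_n$, which is precisely what Definition \ref{defanhmarki} demands.

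The main obstacle is not the bimodule computation, which is handled wholesale by Proposition \ref{e'e''ebimodule}, but rather ensuring that the combined element $e'' \vee e'$ is idempotent. A priori, combining two idempotents via $\vee$ need not yield an idempotent, and Proposition \ref{idempotent} becomes applicable only once one of its five compatibility relations holds. The device that overcomes this is the enlargement step above: by requiring the left unitizer $e'$ to act as a left identity on $e''$ itself, one manufactures the relation $e' e'' = e''$ for free, which is exactly the hypothesis needed. I expect this enlargement to be the single genuinely nonobvious move in the argument.
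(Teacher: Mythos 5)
Your proof is correct and follows essentially the same route as the paper: obtain a right unitizer, then apply left local unitality to the list enlarged by that unitizer to force the compatibility relation, combine via $\vee$, and invoke Propositions \ref{idempotent} and \ref{e'e''ebimodule}. The enlargement step you highlight is exactly the device the paper uses (there phrased as choosing $e''$ with $e'' e' = e'$), so the two arguments coincide up to the naming of the two idempotents.
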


\begin{proof}
The ``only if'' statement is immediate.
Now we show the ``if'' statement.
We use the argument from the proof of \cite[Proposition 1.10]{komatsu1986}
(see also \cite[Example 1]{anh1987}).
Suppose that $R$ is a ring which is locally unital 
in the sense of Definition \ref{deflocallyunital}.
Take $n \in \mathbb{N}$ and $r_1,\ldots,r_n \in R$.
Since $R$ is right locally unital, there is an idempotent $e' \in R$
such that for all $i \in \{ 1,\ldots, n \}$ the
equality $r_i e' = r_i$ holds.
Since $R$ is left locally unital, there is an idempotent $e'' \in R$
such that $e'' e' = e'$ and for all $i \in \{ 1,\ldots,n \}$ the
equality $e'' r_i = r_i$ holds.
Put $e = e' \vee e''$. From Proposition \ref{idempotent}
it follows that $e$ is idempotent.
From Proposition \ref{e'e''ebimodule}, with $X = \{ r_1,\ldots,r_n \}$,
it follows that for all $i \in \{ 1,\ldots,n \}$ the equalities
$e r_i = r_i e = r_i$ hold.
So $R$ is locally unital in the sense of Definition \ref{defanhmarki}.
\end{proof}

\begin{defi}
The ring $R$ is called \emph{regular} if for every $r \in R$ there is
$s \in R$ such that $r = rsr$. 
\end{defi}

The next proposition is \cite[Example 1]{anh1987}.

\begin{prop}
Every regular ring is locally unital.
\end{prop}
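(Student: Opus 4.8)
The plan is to show that $R$ is both left and right locally unital in the sense of Definition \ref{deflocallyunital}, and then to invoke the equivalence of Definitions \ref{deflocallyunital} and \ref{defanhmarki} established just above. Since the regularity condition is left--right symmetric (the opposite ring of a regular ring is again regular), it suffices to produce, for any $r_1,\ldots,r_n \in R$, an idempotent $e$ with $r_i e = r_i$ for all $i$; applying this to $R^{\mathrm{op}}$ then yields an idempotent with $e r_i = r_i$, so that $R$ is also left locally unital.

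The base of the construction is the single-element case. If $r \in R$ and $s \in R$ satisfies $r = rsr$, then $e = sr$ is idempotent, since $e^2 = s(rsr) = sr = e$, and it is a right unit for $r$, since $re = rsr = r$. I would then argue by induction on $n$. Suppose an idempotent $f$ has been found with $r_i f = r_i$ for $i < n$, and set $b = r_n - r_n f$, so that $bf = 0$. Applying the single-element case to $b$ yields an idempotent $g$ with $bg = b$; writing $g = s_b b$ for a suitable $s_b$, one gets $gf = s_b(bf) = 0$.

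The key step is to replace $g$ by $h = g - fg$. Using $gf = 0$, the cross terms $gfg$ and $fgfg$ vanish, so $h$ is idempotent, and moreover $fh = fg - f(fg) = 0$ and $hf = gf - f(gf) = 0$; thus $f$ and $h$ are orthogonal idempotents. Condition (v) of Proposition \ref{idempotent} then shows that $e = f \vee h = f + h$ is idempotent. Finally one verifies directly that $e$ is a common right unit: for $i < n$ one has $r_i h = r_i g - r_i f g = r_i g - r_i g = 0$, hence $r_i e = r_i f = r_i$, while $r_n h = (r_n - r_n f)g = bg = b$ gives $r_n e = r_n f + b = r_n$. This closes the induction.

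Having shown that $R$ is both left and right locally unital, the previously proved equivalence of the two notions upgrades the separate one-sided idempotents to a single two-sided idempotent unit (via Propositions \ref{idempotent} and \ref{e'e''ebimodule}), so $R$ is locally unital in the sense of Definition \ref{defanhmarki}. I expect the main obstacle to be the inductive step: both the idempotency of $h = g - fg$ and its orthogonality to $f$, as well as the bookkeeping showing $e$ fixes every $r_i$ on the right, all rest on the single identity $gf = 0$. The whole argument therefore hinges on the choice $b = r_n - r_n f$, which is exactly what makes this orthogonality available; once it is in hand, the combination is routine.
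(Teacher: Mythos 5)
Your proof is correct and is essentially the paper's own argument under different variable names: the same inductive step $b = r_n - r_nf$, the same idempotent $g = s_b b$ from regularity, the same orthogonalization $h = g - fg$, and the same final unit $e = f + h$. The only cosmetic differences are that you obtain the left-handed case via the opposite ring rather than writing it out symmetrically, and that you cite Proposition \ref{idempotent}(v) where the paper verifies idempotency of $e$ directly.
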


\begin{proof}
We proceed in almost the same way as in the proof of Proposition \ref{proptominaga}.
Let $R$ be a regular ring.
Take $n \in \mathbb{N}$ and $r_1 , \ldots r_n \in R$.
First we show that $R$ is left locally unital. 
By induction there is an idempotent $e_1 \in R$ such that
for all $i \in \{ 1, \ldots , n-1 \}$ the equality $e_1 r_i = r_i$ holds.
Put $s = r_n - e_1 r_n$. Since $R$ is regular, there is $t \in R$
such that $s = sts$. Put $f = st$. Then $f$ is idempotent and
$$e_1 f = 
e_1 st = 
e_1 (r_n - e_1 r_n) t = 
(e_1 r_n - e_1^2 r_n) t =
(e_1 r_n - e_1 r_n) t = 0.$$
Put $g = f - f e_1$. 
Then $e_1 g = g e_1 = 0$ and
$$g^2 = f^2 - f^2 e_1 - f e_1 f + f e_1 f e_1 = f - f e_1 = g.$$
Let $e = e_1 + g$. Then $e$ is an idempotent.
Take $i \in \{ 1, \ldots , n-1 \}$. Then 
$$e r_i = 
(e_1 + g) r_i = 
(e_1 + g) e_1 r_i =
(e_1^2 + g e_1) r_i =
e_1 r_i = r_i.$$
Finally 
$$
\begin{array}{rcl}
e r_n & = & (e_1 + g) r_n \\
      & = & e_1 r_n + g r_n \\ 
      & = & e_1 r_n + (f - f e_1) r_n \\
      & = & e_1 r_n + f r_n - f e_1 r_n \\
      & = & e_1 r_n + f s \\
      & = & e_1 r_n + s t s \\
      & = & e_1 r_n + s \\
      & = & e_1 r_n + r_n - e_1 r_n \\
      & = & r_n.
\end{array}
$$
Now we show that $R$ is right locally unital.
By induction there is an idempotent $e_1 \in R$ such that
for all $i \in \{ 1, \ldots , n-1 \}$ the equality $r_i e_1 = r_i$ holds.
Put $s = r_n - r_n e_1$. Since $R$ is regular, there is $t \in R$
such that $s = sts$. Put $f = ts$. Then $f$ is idempotent and
$$f e_1 = 
ts e_1 = 
t (r_n - r_n e_1) e_1 = 
t (r_n e_1 - r_n e_1^2 ) =
t (r_n e_1 - r_n e_1) = 0.$$
Put $g = f - e_1 f$. 
Then $e_1 g = g e_1 = 0$ and
$$g^2 = f^2 - e_1 f^2 - f e_1 f + e_1 f e_1 f = f - e_1 f = g.$$
Let $e = e_1 + g$. Then $e$ is an idempotent.
Take $i \in \{ 1, \ldots , n-1 \}$. Then 
$$r_i e = 
r_i (e_1 + g) = 
r_i e_1 (e_1 + g) =
r_i (e_1^2 + e_1 g ) =
r_i e_1 = r_i.$$
Finally 
$$
\begin{array}{rcl}
r_n e & = & r_n (e_1 + g) \\
      & = & r_n e_1 + r_n g \\
      & = & r_n e_1 + r_n (f - e_1 f ) \\
      & = & r_n e_1 + r_n f - r_n e_1 f \\
      & = & r_n e_1 + s f \\
      & = & r_n e_1 + s t s \\
      & = & r_n e_1 + s \\
      & = & r_n e_1 + r_n - r_n e_1 \\
      & = & r_n.
\end{array}
$$
\end{proof}

The next definition was introduced by Abrams in \cite{abrams1983}.

\begin{defi}\label{defabrams}
Suppose that $E$ is a set of commuting idempotents in $R$
which is closed under the operation $\vee$ from Definition \ref{defvee}.
Then $E$ is called a {\it set of local units} for $R$ if for all
$r \in R$ there is $e \in E$ such that $er = re = r$.
\end{defi}

\begin{rem}
In \cite[Definition 1.1]{abrams1983} the condition that $E$ is closed
under $\vee$ was not included. However, since this was intended 
(personal communication with G. Abrams) we chose to include it here. 
\end{rem}

\begin{prop}\label{propE}
If $R$ has a set of local units $E$, then for all $n \in \mathbf{N}$
and all $r_1 , \ldots , r_n \in R$ there is $e \in E$ such that
for all $i \in \{ 1, \ldots , n \}$ the equalities $e r_i = r_i e = r_i$ holds.
\end{prop}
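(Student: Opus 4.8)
The plan is to follow the pattern of Propositions \ref{proptominaga} and \ref{propmogami}, but to take care that every element produced along the way lies inside the prescribed set $E$ and not merely in $R$. Note first that $R$ is automatically $s$-unital, so Proposition \ref{proptominaga} already yields common one-sided units in $R$; the whole difficulty is to keep the witnesses inside $E$, and the only tool available for this is the assumption in Definition \ref{defabrams} that $E$ is closed under the operation $\vee$ of Definition \ref{defvee}. I would therefore first establish the two one-sided statements separately: for all $r_1,\dots,r_n \in R$ there exist $e^L, e^R \in E$ with $e^L r_i = r_i$ and $r_i e^R = r_i$ for every $i$, and then merge them into a single two-sided unit.

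For the left-hand statement I would argue by induction on $n$, imitating the induction in the proof of Proposition \ref{proptominaga}. The base case $n = 1$ is immediate from Definition \ref{defabrams}. For the inductive step, Definition \ref{defabrams} supplies $e_n \in E$ with $e_n r_n = r_n$; setting $v_i = r_i - e_n r_i$ for $i < n$ and applying the inductive hypothesis to $v_1,\dots,v_{n-1} \in R$ produces $e' \in E$ with $e' v_i = v_i$. The candidate is then $e^L = e' \vee e_n$, which lies in $E$ precisely because $E$ is closed under $\vee$. The verification that $e^L r_i = r_i$ for all $i \le n$ is the same computation as in Proposition \ref{proptominaga}, the substitution $v_i = r_i - e_n r_i$ being exactly what makes it go through, so I would not repeat it. The right-hand statement is proved symmetrically.

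Finally I would combine the two one-sided units. Given $e^L, e^R \in E$ as above, I would invoke Proposition \ref{e'e''ebimodule} with $M = R$, with $X = \{ r_1,\dots,r_n \}$, and with $e' = e^L$ and $e'' = e^R$, whose hypotheses $e^L r_i = r_i e^R = r_i$ hold by construction. This yields $(e^R \vee e^L) r_i = r_i (e^R \vee e^L) = r_i$ for all $i$, and $e := e^R \vee e^L$ again lies in $E$ by closure under $\vee$, which finishes the proof. I expect the main obstacle to be exactly this bookkeeping of membership in $E$: the existence of the witnesses in $R$ is essentially free, and the real content is noticing that closure under $\vee$ is what upgrades the $R$-valued witnesses of Proposition \ref{proptominaga} to $E$-valued ones, while Proposition \ref{e'e''ebimodule} merges a left and a right unit without even requiring the idempotents to commute.
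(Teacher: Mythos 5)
Your proof is correct, but it takes a genuinely different route from the paper. The paper runs a single induction on $n$ that produces two-sided units in $E$ directly: the inductive hypothesis yields $e_1 \in E$ with $e_1 r_i = r_i e_1 = r_i$ for $i < n$, the definition yields $e_2 \in E$ with $e_2 r_n = r_n e_2 = r_n$, and $e = e_1 \vee e_2$ is verified to work by a computation that explicitly invokes the hypothesis that the idempotents in $E$ commute (two of the four identities require replacing $e_1 e_2$ by $e_2 e_1$ before simplifying). You instead run two separate Tominaga-style one-sided inductions, each kept inside $E$ by closure under $\vee$, and then merge the resulting left unit $e^L$ and right unit $e^R$ via Proposition \ref{e'e''ebimodule}. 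Your route is longer --- two inductions plus a merge instead of one induction --- but it isolates exactly which parts of Definition \ref{defabrams} are doing the work: you use only the closure of $E$ under $\vee$ and never the commutativity (or even the idempotency) of its elements, whereas the paper's shorter argument leans on commutativity in an essential way. Both arguments are complete and correct; yours has the side benefit of showing that the conclusion of Proposition \ref{propE} survives if the commutativity requirement is dropped from the definition, at the cost of not being the ``one-line'' induction the paper gives.
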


\begin{proof}
Take $n \in \mathbb{N}$ and $r_1 , \ldots r_n \in R$. 
By induction there is $e_1 , e_2 \in E$ such 
$e_2 r_n = r_n e_2 = r_n$ and for all 
$i \in \{ 1, \ldots , n-1 \}$ the relations
$e_1 r_i = r_i e_1 = r_i$ hold. Put $e = e_1 \vee e_2$. 
Then, since $e_1 e_2 = e_2 e_1$, we get that
$$e r_n = 
e_1 r_n + e_2 r_n - e_1 e_2 r_n =
e_1 r_n + r_n - e_1 r_n = 
r_n$$ 
and
$$r_n e =
r_n e_1 + r_n e_2 - r_n e_2 e_1 =
r_n e_1 + r_n - r_n e_1 = 
r_n$$
and for all $i \in \{ 1, \ldots , n-1 \}$ we get that
$$e r_i = 
e_1 r_i + e_2 r_i - e_2 e_1 r_i =
r_i + e_2 r_i - e_2 r_i = 
r_i$$
and
$$r_i e =
r_i e_1 + r_i e_2 - r_i e_1 e_2 =
r_i + r_i e_2 - r_i e_2 =
r_i.$$  
\end{proof}

\begin{prop}
If a ring has a set of local units, then it is locally unital.
\end{prop}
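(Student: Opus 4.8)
The plan is to observe that essentially all of the work has already been done in Proposition \ref{propE}, and that the present statement is a direct consequence of it together with the defining properties of a set of local units. Recall that being locally unital in the sense of Definition \ref{defanhmarki} requires that, for every finite list $r_1, \ldots, r_n \in R$, there exist a single \emph{idempotent} $e \in R$ acting as a two-sided identity on all of the $r_i$ simultaneously. A set of local units $E$ (Definition \ref{defabrams}) only guarantees, a priori, a two-sided identity for each \emph{single} element of $R$; the passage from one element to finitely many is precisely the content of Proposition \ref{propE}.

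Concretely, I would argue as follows. Suppose $R$ has a set of local units $E$, and take $n \in \mathbb{N}$ together with $r_1, \ldots, r_n \in R$. By Proposition \ref{propE} there exists $e \in E$ such that $e r_i = r_i e = r_i$ for all $i \in \{1, \ldots, n\}$. The key point is then simply that $E$ is, by Definition \ref{defabrams}, a set of idempotents, so this particular $e$ is itself an idempotent element of $R$. Hence $e$ is an idempotent satisfying $e r_i = r_i e = r_i$ for every $i$, which is exactly the requirement of Definition \ref{defanhmarki}. Therefore $R$ is locally unital.

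I do not expect any genuine obstacle in this argument, since the two substantive ingredients are already available: Proposition \ref{propE} supplies a common two-sided unit for finitely many elements (its proof being where the commutativity of $E$ and closure under $\vee$ from Definition \ref{defvee} actually get used), and the fact that members of $E$ are idempotents is built into the definition of a set of local units. The only thing to be careful about is to note explicitly that the element $e$ produced lies in $E$ and is thus idempotent, rather than being merely an arbitrary element of $R$; once that is observed, the two definitions match up immediately and the proof is complete.
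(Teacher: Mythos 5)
Your proposal is correct and matches the paper's approach exactly: the paper's proof is the one-line observation that the statement follows from Proposition \ref{propE}, and you have simply spelled out the same deduction, correctly noting that the element $e$ supplied by that proposition lies in $E$ and is therefore idempotent, as required by Definition \ref{defanhmarki}. No gaps.
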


\begin{proof}
This follows from Proposition \ref{propE}.
\end{proof}

\begin{exa}
According to \cite[Example 1]{anh1987} 
there are regular rings that do not possess sets of local units 
in the sense of Definition \ref{defabrams}. 
\end{exa}

\begin{defi}
If $e,f \in R$ are idempotent, then $e$ and $f$ are said to be
\emph{orthogonal} if $ef = fe = 0$.
\end{defi}

The following definition was introduced by Fuller in \cite{fuller1976}.

\begin{defi}\label{deffuller}
The ring $R$ is said to have {\it enough idempotents} in case 
there exists a set $\{ e_i \}_{i \in I}$ of orthogonal
idempotents in $R$ (called a complete set of idempotents for $R$) 
such that $R = \oplus_{i \in I} R e_i = \oplus_{i \in I} e_i R$.
\end{defi}

\begin{exa}
There exist rings which have sets of local units in the sense
of Definition \ref{defabrams} but which does not have enough idempotents
in the sense of Definition \ref{deffuller}.
To exemplify this we recall the construction from \cite[Example 1.6]{abrams1981}.
Let $F$ denote the field with two elements and let
$R$ be the ring of all functions $f : \mathbb{N} \rightarrow F$.
For each $n \in \mathbb{N}$ define $f_n \in R$ by
$f_n(n) = 1$, and $f_n(m) = 0$, if $m \neq n$.
For all finite subsets $S$ of $\mathbb{N}$ define
$f_S \in R$ via $f_S = \sum_{n \in S} f_n$.
Then $I = \{ f_S \mid \mbox{$S$ is a finite subset of $\mathbb{N}$} \}$
is an ideal of $R$.
Since $R$ is unital, Zorn's lemma implies the existence
of a maximal proper ideal $M$ of $R$ with $I \subseteq M$.
Since all elements in $R$, and hence also in $M$, are idempotent,
it follows that $M$ is a ring with $E=M$ as a set of local units. 
Seeking a contradiction, suppose that $M$ has a complete set 
of idempotents $\{ e_j \}_{j \in J}$.
Since $I$, and hence $M$, contains all $f_n$, for $n \in \mathbb{N}$,
it follows that $1_R = \sum_{j \in J} e_j$.
Since $M$ is a proper ideal, we get that $1_R \notin M$
and thus it follows that $J$ is an infinite set.
Choose any partition $J = K \cup L$, with $K \cap L = \emptyset$,
and $K$ and $L$ infinite.
Define $e_K = \sum_{k \in K} e_k$ and $e_L = \sum_{l \in L} e_l$.
Since the $e_j$ are pairwise orthogonal, we get that $e_K e_L = 0$.
But $M$ is a maximal ideal of $R$. Therefore $M$ is a prime ideal of $R$
and thus $e_K \in M$ or $e_L \in M$.
Suppose that $e_K \in M$. Since $\{ e_j \}_{j \in J}$ is a complete set
of idempotents, there must exist a finite set $J'$ of $J$
with $e_K = \sum_{j \in J'} e_j$ which is a contradiction.
Analogously, the case when $e_L \in M$ leads to a contradiction.
Therefore, $M$ is not a ring with enough idempotents.
\end{exa}

\begin{defi}
If $M$ is a left (right) $R$-module,
then $M$ is called {\it left (right) unital} if there is
$e \in R$ such that for all $m \in M$ the relation 
$e m = m$ ($m e = m$) holds. 
In that case $e$ is said to be a {\it left (right) identity} for $M$.
If $M$ is an $R$-bimodule, then $M$ is called {\it unital}
if it is unital both as a left $R$-module and a right $R$-module.
The ring $R$ is said to be {\it left (right) unital} if it is
left (right) unital as a left (right) module over itself.
The ring $R$ is called {\it unital} if it is unital
as a bimodule over itself.
\end{defi}

\begin{exa}
The ring $B_l$ (or $B_r$) from Example \ref{exampleleftright}
is a ring which is left (or right) unital but not right (or left) unital.
\end{exa}

\begin{exa}
There are many classes of rings that are neither left nor right unital but still 
have enough idempotents. Here are some examples:
\begin{itemize}

\item infinite direct sums of unital rings;

\item category rings where the category has infinitely many objects (see e.g. \cite[Proposition 4]{lundstrom2006});

\item Leavitt path algebras with infinitely many vertices (see e.g. \cite[Lemma 1.2.12(iv)]{abrams2017}).

\end{itemize}
\end{exa}

\begin{prop}\label{emmem}
Let $M$ be an $R$-bimodule. Then $M$ is unital if and only if
there is $e \in R$ such that for all $m \in M$ the relations
$em = me = m$ hold.
\end{prop}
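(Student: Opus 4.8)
The plan is to reduce everything to Proposition \ref{e'e''ebimodule}, so that no genuine computation is needed here. The ``if'' direction requires no work: if some single $e \in R$ satisfies $em = me = m$ for every $m \in M$, then in particular $em = m$ holds for all $m$, so $M$ is left unital, and $me = m$ holds for all $m$, so $M$ is right unital; hence $M$ is unital as a bimodule directly from the definition.

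For the ``only if'' direction, I would first unwind the definition of a unital bimodule. Since $M$ is unital, it is by definition both left unital and right unital. Left unitality produces $e' \in R$ with $e' m = m$ for all $m \in M$, and right unitality produces $e'' \in R$ with $m e'' = m$ for all $m \in M$. These two elements are a priori distinct, so the remaining task is to manufacture from them a single element acting as a two-sided identity on all of $M$.

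The key step is to invoke Proposition \ref{e'e''ebimodule} with the whole module as the test set, i.e.\ with $X = M$. Its hypothesis, namely $e' m = m e'' = m$ for all $m \in X$, is exactly what the previous paragraph established. Setting $e = e'' \vee e'$, where $\vee$ is the operation of Definition \ref{defvee}, the proposition immediately yields $e m = m e = m$ for all $m \in M$, which is the desired conclusion. There is no real obstacle: all the algebraic content has already been absorbed into Proposition \ref{e'e''ebimodule}, and this statement is simply the single-element (``unital'') analogue of the ``$s$-unital'' Proposition \ref{propmogami}. The only subtlety worth flagging is that $e'$ and $e''$ need not coincide, and bridging precisely that gap is what the $\vee$-operation is designed to do.
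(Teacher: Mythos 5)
Your proof is correct and follows exactly the paper's route: the ``if'' direction is immediate, and the ``only if'' direction applies Proposition \ref{e'e''ebimodule} with $X = M$ and $e = e'' \vee e'$. You have merely spelled out the unwinding of the definition that the paper leaves implicit.
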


\begin{proof}
The ``if'' statement is trivial.
The ``only if'' statement follows
from Proposition \ref{e'e''ebimodule} if we put $X = M$.
\end{proof}

\begin{prop}\label{errer}
The ring $R$ is unital if and only if there is $e \in R$
such that for all $r \in R$ the relations $er = re = r$ hold.
\end{prop}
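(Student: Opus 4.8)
The plan is to reduce this statement to Proposition \ref{emmem}, which already establishes the analogous criterion for an arbitrary $R$-bimodule. By the definition immediately preceding this proposition, the ring $R$ is called unital precisely when it is unital as a bimodule over itself, where the bimodule structure is given by left and right multiplication in $R$. Thus I would take $M = R$ and apply the earlier result verbatim, expecting the proof to collapse to essentially one line.

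First I would recall that $R$, regarded as an $R$-bimodule over itself, is unital in the bimodule sense if and only if there is $e \in R$ with $em = me = m$ for all $m \in M$, which is exactly the content of Proposition \ref{emmem}. Since here $M = R$, the condition ``$em = me = m$ for all $m \in M$'' is literally the condition ``$er = re = r$ for all $r \in R$''. Combining this equivalence with the definitional identification of a unital ring as a ring that is unital over itself as a bimodule yields both implications simultaneously, with no separate argument needed for the ``if'' and ``only if'' directions.

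The only point requiring care---and it is the main, if minor, obstacle---is to make explicit that the bimodule notion of unitality invoked in Proposition \ref{emmem} coincides with the ring-theoretic definition of a unital ring, i.e.\ that ``unital as a bimodule over itself'' unpacks precisely to the existence of a two-sided identity element. Once this identification is noted, no computation remains, and the proof reduces to observing that the claim follows directly from Proposition \ref{emmem} applied to $M = R$.
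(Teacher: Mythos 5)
Your proposal is correct and is exactly the paper's own proof: the paper simply states that the claim follows from Proposition \ref{emmem} by putting $M = R$. Your additional remark that one must identify ``unital as a bimodule over itself'' with the ring-theoretic notion is the right point of care, and it is precisely what makes the one-line reduction work.
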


\begin{proof}
This follows from Proposition \ref{emmem} if we put $M = R$. 
\end{proof}

\begin{rem}\label{remarkidentity}
Proposition \ref{errer} can of course be proved directly
in the following way. Let $e'$ (or $e''$) be a left (or right)
identity for $R$ as a left (or right) module over itself.
Then $e' = e' e'' = e''$. 
\end{rem}

We end the article with the following remark which connects
unitality and s-unitality.

\begin{prop}
If $R$ is left (right) $s$-unital and right (left) unital,
then $R$ is unital.
\end{prop}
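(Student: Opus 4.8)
The plan is to reduce to one case by symmetry and then to show that the one-sided global identity provided by unitality is in fact two-sided. Assume $R$ is left $s$-unital and right unital; the other case is symmetric. Let $e''$ be a right identity for $R$, so that $r e'' = r$ for every $r \in R$. By Proposition \ref{errer} it suffices to prove that $e''$ is also a left identity, since then $e''$ satisfies $e'' r = r e'' = r$ for all $r \in R$ and hence $R$ is unital.

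To show $e'' r = r$ for an arbitrary $r \in R$, I would apply left $s$-unitality not to $r$ itself but to the ``defect'' element $s = r - e'' r$. By the definition of left $s$-unitality (i.e.\ $s \in Rs$) there is some $e \in R$, depending on $s$, with $e s = s$. Expanding gives $s = e s = e r - e e'' r$, and here the right identity collapses $e e''$ to $e$, so the two terms cancel and $s = 0$; that is, $e'' r = r$, as desired.

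The main obstacle to anticipate is the usual gap between $s$-unitality and unitality: $s$-unitality supplies only local, element-dependent identities, whereas unitality demands a single global one. The device that closes this gap is that we already possess a \emph{global} right identity from the unitality hypothesis, so we only need to promote it to a left identity. Applying the local left identity to the single element $r - e'' r$, rather than attempting to manufacture a global left identity from scratch, is precisely what makes the argument succeed, since the relation $e e'' = e$ eliminates the dependence of $e$ on $r$. Note that the full finite-family strength of Proposition \ref{proptominaga} is not needed here, as the one-element version of $s$-unitality already suffices.
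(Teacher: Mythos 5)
Your argument is correct, and it takes a genuinely (if mildly) different route from the paper's. The paper invokes Proposition \ref{proptominaga} to produce, for a given $r$, a single element $e$ satisfying \emph{both} $er=r$ and $ef=f$ (where $f$ is the right identity); the relation $ef=e$ then forces $e=f$, so the global right identity is itself the local left identity and hence a left identity for all of $R$. You instead apply only the one-element definition of left $s$-unitality, and not to $r$ but to the defect $s=r-e''r$: from $es=s$ and $ee''=e$ you get $s=er-ee''r=0$ directly. Both proofs pivot on the same cancellation $ee''=e$ supplied by the right identity, but yours is more economical in that it bypasses Tominaga's finite-family lemma entirely and never needs to identify the local unit with $e''$ --- it only needs the local unit to annihilate nothing while fixing $s$. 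The paper's version has the small aesthetic advantage of exhibiting the conclusion as ``the local identities collapse onto the global one,'' which matches the theme of the surrounding results, whereas yours isolates the minimal hypothesis actually required.
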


\begin{proof}
First suppose that $R$ is left $s$-unital and right unital.
Let $f$ be a right identity of $R$ and take $r \in R$.
From Proposition \ref{proptominaga} it follows that 
there is $e \in R$ with $er = r$ and $ef = f$.
But since $f$ is a right identity of $R$ it follows that $ef = e$.
Thus $e = f$ and hence $fr = er = r$ so that $f$ is a left identity of $R$.
Now suppose that $R$ is right $s$-unital and left unital.
Let $f$ be a left identity of $R$ and take $r \in R$.
From Proposition \ref{proptominaga} it follows that 
there is $e \in R$ with $re = r$ and $fe = f$.
But since $f$ is a left identity of $R$ it follows that $fe = e$.
Thus $e = f$ and hence $rf = re = r$ so that $f$ is a right identity of $R$.
\end{proof}


\begin{thebibliography}{99}

\bibitem{abrams1981}
G. D. Abrams,
{\it Rings with local units},
Dissertation, University of Oregon (1981). 

\bibitem{abrams1983}
G. D. Abrams,
Morita equivalence for rings with local units,
{\it Comm. Algebra} {\bf 11}(8) (1983), 801--837.

\bibitem{abrams2017}
G. Abrams, P. Ara and M. S. Molina,
{\it Leavitt Path Algebras},
Springer Lecture Notes, London (2017).

\bibitem{anh1987}
P. N. \'{A}nh and L. M\'{a}rki,
Morita equivalence for rings without identity,
{\it Tsukuba J. Math.} {\bf 11}(2) (1987), 1--16.

\bibitem{fuller1976}
K. R. Fuller,
On rings whose left modules are direct sums of finitely generated modules,
{\it Proc. Amer. Math. Soc.} {\bf 54} (1976), 39--44.

\bibitem{komatsu1986}
H. Komatsu,
The category of $s$-unital modules,
{\it Math. J. Okayma univ.} {\bf 28} (1986), 65--91.

\bibitem{lam2003}
T. Y. Lam,
{\it Exercises in classical ring theory},
2nd edition, Springer-Verlag New York (2003).

\bibitem{lundstrom2006}
P. Lundstr\"{o}m,
Separable Groupoid Rings,
{\it Comm. Algebra} {\bf 34} (2006), 3029--3041.

\bibitem{mogami1978}
I. Mogami and M. Hongan,
Note on commutativity of rings.
{\it Math. J. Okayama univ.} {\bf 20} (1978), 21--24.

\bibitem{tominaga1975}
H. Tominaga, On decompositions into simple rings.
{\it Math. J. Okayama univ.} {\bf 17}(2) (1975), 159--163.

\bibitem{tominaga1976}
H. Tominaga, On s-unital rings. 
{\it Math. J. Okayama univ.} {\bf 18} (1976), 117-–134.

\end{thebibliography}
\end{document}